\renewcommand{\P}{\mathbb{P}}
\newcommand{\E}{\mathbb{E}}
\newcommand{\Var}{\mathrm{Var}}
\newcommand{\lijn}{\hspace{0.05cm}\big|\hspace{0.05cm}}
\newcommand{\dx}{\text{d}x}
\newcommand{\atot}{A_{tot}}
\newcommand{\la}{\lambda_A}
\newcommand{\lb}{\lambda_B}
\newtheorem{definition}{Definition}
\newtheorem{theorem}{Theorem}
\newtheorem{lemma}{Lemma}
\newcommand{\footremember}[2]{%
   \footnote{#2}
    \newcounter{#1}
    \setcounter{#1}{\value{footnote}}%
}
\newcommand{\footrecall}[1]{%
    \footnotemark[\value{#1}]%
} 
\begin{document}


\title{Degree distributions in AB random geometric graphs}
\author{
Clara Stegehuis\footremember{ut}{Department of Electrical Engineering, Mathematics and Computer Science, University of Twente, the Netherlands} $\&$ Lotte Weedage\footrecall{ut}}

\maketitle
\begin{abstract}
In this paper, we provide degree distributions for AB random geometric graphs, in which points of type $A$ connect to the closest $k$ points of type $B$. The motivating example to derive such degree distributions is in 5G wireless networks with multi-connectivity, where users connect to their closest $k$ base stations. In this setting, it is important to know how many users a particular base station serves, which gives the \textit{degree} of that base station. To obtain these degree distributions, we investigate the distribution of area sizes of the $k-$th order Voronoi cells of $B$-points. Assuming that the $A$-points are Poisson distributed, we investigate the amount of users connected to a certain $B$-point, which is equal to the degree of this point. In the simple case where the $B$-points are placed in an hexagonal grid, we show that all $k$-th order Voronoi areas are equal and thus all degrees follow a Poisson distribution. However, this observation does not hold for Poisson distributed $B$-points, for which we show that the degree distribution follows a compound Poisson-Erlang distribution in the 1-dimensional case. We then approximate the degree distribution in the 2-dimensional case with a compound Poisson-Gamma degree distribution and show that this one-parameter fit performs well for different values of $k$. Moreover, we show that for increasing $k$, these degree distributions become more concentrated around the mean. This means that $k$-connected $AB$ random graphs balance the loads of $B$-type nodes more evenly as $k$ increases. Finally, we provide a case study on real data of base stations. We show that with little shadowing in the distances between users and base stations, the Poisson distribution does not capture the degree distribution of these data, especially for $k>1$. However, under strong shadowing, our degree approximations perform quite good even for these non-Poissonian location data.\\

\textbf{Keywords:} Poisson Point Process, $AB$ random geometric graph, Voronoi cells, degree distribution, wireless networks, multi-connectivity
\end{abstract}




\section{Introduction}
Spatial point processes have many applications, ranging from the distribution of stars in the Milky Way~\cite{babu_spatial_1996} to the dispersal of biological species~\cite{othmer_models_1988,renner_equivalence_2013}. One application of spatial processes which has received significant interest is in wireless networks~\cite{elsawy_stochastic_2013}. In the typical setting, a wireless network consists of base stations and users that are distributed according to some spatial process, and users connect to the nearest base station. \\

In 5G networks, the new concept multi-connectivity is introduced. In multi-connected networks, users connect to the $k>1$ nearest base stations. Having multiple connections can make the internet connection faster and more reliable~\cite{tesema2015mobility}. In multi-connectivity, the size distribution of $k-$th order areas is a quantity of importance. Indeed, from the are size distribution, one can obtain the distribution of the number of users that connect to a particular base station. This quantity is necessary to derive analytical expressions for important network statistics such as the network capacity and outage probabilities.\\

In this paper we therefore investigate the degree distribution in $AB$ random geometric graphs \cite{Penrose1999OnGraph}, a random graph model for multi-connected networks in which points of type $A$ connect to the $k$ closest points of type $B$. We are interested in the size of the area in which a given $B$-point is the $k$-th closest to a given $A$-point to derive the degree distribution of $B$-points. Here we assume that $A$-points are distributed as a Poisson point process. Beside applications in multi-connected networks, other applications are in the $G_{n,k}$ random graph, where $n$ points connect to their closest $k$ neighbors. For these types of random graphs, only high-level characteristics are known, such as the parameters such that the resulting graph is connected~\cite{balister_connectivity_nodate}. Results on the area in which a given point is $k$-th closest would enable to derive the degree distribution of these random graphs as well, which could give more insights into the behavior of these graphs under dynamic processes such as epidemics or cascading failures. Other applications of such areas are in $k$-nearest neighbor classification~\cite{sitawarin_adversarial_2020} or in plant ecology~\cite{magnussen_gamma-poisson_nodate}.\\

A property of Poisson processes is that a specified area size provides the distribution of the amount of users in that area. Thus, rather than analyzing the degree distribution directly, in this paper, we first derive expressions for the size distributions of the areas in which $B$ points are $k$-th closest to $A$ points.\\

This degree distribution depends on the spatial distribution of the $B$-points, as different spatial distributions give different areas in which $B$-points are $k$-th closest. We focus on two popular location models: a hexagonal grid model and a Poisson point process. The Poisson point process is one of the most popular spatial processes, due to its mathematical properties that make it relatively easy to analyze. Examples of spatial processes that are often modeled by Poisson processes are wireless networks~\cite{elsawy_stochastic_2013}, the dispersal of biological species~\cite{othmer_models_1988} or in forestry~\cite{stoyan2000recent}. 
The Poisson point process has proven useful to obtain several quantities of interest analytically. For example, in single-connected wireless networks, the Poisson process allows to derive the probability of network outages or the capacity that each user in the network receives~\cite{elsawy_stochastic_2013}.  
The hexagonal grid is a simpler spatial process model, which has been used in many applications such as modeling wireless networks~\cite{nasri2015tractable}, ecology~\cite{birch2007rectangular} and agent based modeling~\cite{brown2005spatial}. The area sizes in this hexagonal grid are easier to analyse in terms of Voronoi area sizes, but other quantities of interest may be more difficult to derive, as the locations of points in a hexagonal grid are dependent, in contrast to the Poisson point process. In wireless networks, the hexagonal grid model often overestimates network performance measures such as outage probabilities compared to real data, while the Poisson point process slightly underestimates them \cite{lee2013stochastic}.\\

For both location models, we start with investigating the area sizes in the 1-dimensional case, after which we will show the 2-dimensional case.
For $k=1$, the problem reduces to finding the size-distribution of Voronoi cells, cells which indicate in which region a given point is the closest of all points for the grid and the Poisson point Process. Unfortunately, no exact characterisation of the sizes of these Poisson-Voronoi cells exist, although several approximations from numerical simulations exist~\cite{jarai-szabo_size-distribution_2008,weaire1986distribution}.\\

For $k>1$, the area in which a user connects to a given point can no longer be found by standard Voronoi diagrams. For such settings, higher-order Voronoi diagrams or $k$-th order Voronoi diagrams exist~\cite{edelsbrunner_voronoi_1986}. For example, in a second order Voronoi diagram, every cell corresponds to a pair of points $(i_1,i_2)$, such that $i_1$ is the closest, and $i_2$ is the second closest in that area. In a $k$-th order Voronoi diagram, every cell represents the area where a given point is $k$-th closest. These cells are in general nor convex, nor connected, making analysis of $k$-th order Voronoi cells complex~\cite{edelsbrunner_voronoi_1986}. Several results on fast algorithms to construct higher-order Voronoi diagrams exist~\cite{aurenhammer_simple_2011,boissonnat_semidynamic_1993, zavershynskyi_sweepline_2013}, as well as results on the complexity of its cells~\cite{bohler_complexity_2015} as well as the cell shapes~\cite{edelsbrunner_voronoi_1986, martinez-legaz_structure_2019}. However, to our knowledge, no results on the cell sizes of higher-order Voronoi cells or $k$-th order Voronoi cells exist under any type of underlying point process. \\

In this paper, we investigate the sizes of $k$-th order Voronoi area's for $k>1$ in order to find the degree distributions for the Poisson process and the hexagonal grid. In the 1-dimensional setting, we obtain an exact result of the distributions of the regions where given points are the $k$-th closest. Interestingly, these regions are \emph{equal} in distribution for all $k$. We also derive an exact expression of the degree distribution in the 1-dimensional setting. We show that the degree distribution becomes more concentrated when $k$ grows. Thus, increasing $k$ balances the load in terms of connections more evenly among the $B$ points.  \\

In the 2-dimensional setting, we provide exact results for the areas and degree distributions under the hexagonal grid model. Unfortunately, for the Poisson point process no exact results on the Voronoi-area sizes exist even for $k=1$. We therefore turn to numerical simulations instead. We provide one-parameter fitted distributions similar to the well-accepted approximation for $k=1$ to approximate the distribution of the areas where a given point is $k$-th closest in Poisson-Voronoi cells. With these parameter fits, we find a compound Poisson-Gamma degree distribution. Moreover, we show that the 1-dimensional Poisson case, for which we found an exact degree distribution, also approximates the 2-dimensional degree distribution well, especially when $k$ becomes large.  In both the Poisson point process and the hexagonal grid, we show that the coefficient of variation of the degree distributions decrease when $k$ increases, which means that the load of the network gets more evenly balanced among the $B$-points.\\

Finally, we investigate a case study of real data of base stations in the Netherlands. Interestingly, while these base stations are not distributed according to a Poisson point process, we show that when some randomness is present in the random graph connections, in the form of shadowing present, the degree distribution of these non-Poissonian data is well approximated by our results for the one- or 2-dimensional Poisson case for $k > 1$. When this randomness is not present, the fits for the degree distribution that is obtained from the Poisson Point processes still fits reasonably well for $k=1$, but the fit significantly deteriorates for larger $k$. This also indicates that it is possible that for some data a Poisson point process is a suitable model for $k$-connected AB-graphs when $k=1$, but not for larger values of $k$.  \\ 

In Section \ref{sec:degreedistr}, we derive analytical results for the degree distribution for the hexagonal grid in 1 and 2 dimensions and for the 1-dimensional Poisson setting. Moreover, we provide an approximate one-parameter fit for the 2-dimensional setting. We then show the quality of this fit,  and derive the degree distribution for Poisson distributed points. Then, in Section \ref{sec:limiting_degree}, we investigate a case study on non-Poissonian real data of base station locations. We show that while these real base station locations are not distributed as a Poisson process, under high shadowing, our approximations still work quite well to predict the degree distributions of these base stations. Furthermore, we show that without shadowing, the degree distribution is still predicted quite well for $k=1$, but for higher values of $k$, it becomes worse. This indicates that while the frequently made Poisson point process assumption may be justified for $k=1$, the correlations between the presence of different points in non-Poissonian data can make the Poisson assumption unjustified for higher-order connectivity levels.

\section{Degree distribution in $k$-connectivity}\label{sec:degreedistr}
We model $k$-connectivity in an $AB$ random geometric graph \cite{penrose2014continuum} consisting of points of type $A$ and points of type $B$. Type $A$ points are distributed by a homogeneous Poisson Point Process with density $\la$, while type $B$-points have a general spatial distribution. Every point in $A$ connects to the nearest $k$ points in $B$. An example of 2-connectivity is given in Figure \ref{tikz:example2con}.\\

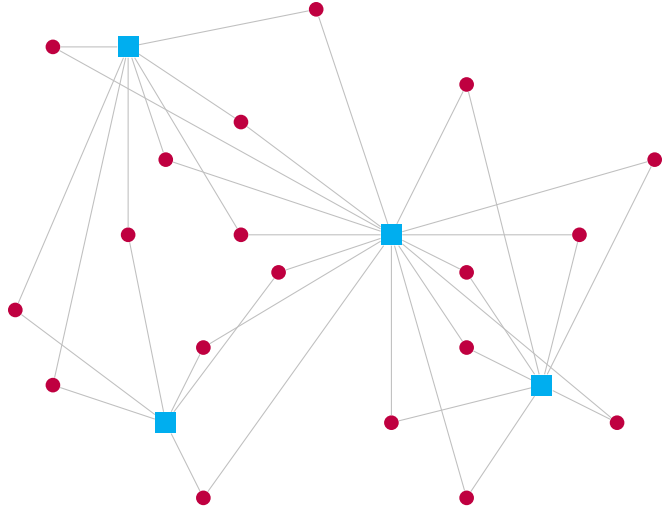
\begin{figure}[tbp]
    \centering
\begin{tikzpicture}
        \node[shape = rectangle, fill, color = cyan] (BS1) at (1,0.5) {};
        \node[shape = rectangle, fill, color = cyan] (BS2) at (-2.5,3) {};
        \node[shape = rectangle, fill, color = cyan] (BS3) at (3,-1.5) {};
        \node[shape = rectangle, fill, color = cyan] (BS4) at (-2,-2) {};
        
        \node[shape = circle, fill, color = purple, scale =0.5] (U1) at (-2,1.5) {};
        \node[shape = circle, fill, color = purple, scale =0.5]  (U2) at (-1,2) {};
        \node[shape = circle, fill, color = purple, scale =0.5] (U3) at (2,2.5) {};
        \node[shape = circle, fill, color = purple, scale =0.5]  (U4) at (0,3.5) {};
        \node[shape = circle, fill, color = purple, scale =0.5] (U5) at (-2.5,0.5) {};
        \node[shape = circle, fill, color = purple, scale =0.5] (U6) at (-1.5,-1) {};
        \node[shape = circle, fill, color = purple, scale =0.5] (U7) at (2,-3) {};
        \node[shape = circle, fill, color = purple, scale =0.5]  (U8) at (-1.5,-3) {};
        \node[shape = circle, fill, color = purple, scale =0.5] (U9) at (2,-1) {};
        \node[shape = circle, fill, color = purple, scale =0.5] (U10) at (4,-2) {};
        \node[shape = circle, fill, color = purple, scale =0.5] (U11) at (2,0) {};
        \node[shape = circle, fill, color = purple, scale =0.5] (U12) at (3.5,0.5) {};
        \node[shape = circle, fill, color = purple, scale =0.5] (U13) at (-3.5,-1.5) {};
        \node[shape = circle, fill, color = purple, scale =0.5] (U14) at (-4,-0.5) {};
        \node[shape = circle, fill, color = purple, scale =0.5] (U15) at (4.5,1.5) {};
        \node[shape = circle, fill, color = purple, scale =0.5] (U16) at (-3.5,3) {};
        \node[shape = circle, fill, color = purple, scale =0.5] (U17) at (-1,0.5) {};
        \node[shape = circle, fill, color = purple, scale =0.5] (U18) at (-0.5,0) {};
        \node[shape = circle, fill, color = purple, scale =0.5] (U19) at (1,-2) {};
        
        \draw[color = lightgray] (U16) -- (BS2);
        \draw[color = lightgray] (U1) -- (BS2);
        \draw[color = lightgray] (U2) -- (BS2);
        \draw[color = lightgray] (U4) -- (BS2);
        \draw[color = lightgray] (U5) -- (BS2);
        
        \draw[color = lightgray] (U14) -- (BS4);
        \draw[color = lightgray] (U13) -- (BS4);
        \draw[color = lightgray] (U8) -- (BS4);
        \draw[color = lightgray] (U6) -- (BS4);
        
        \draw[color = lightgray] (U18) -- (BS1);
        \draw[color = lightgray] (U17) -- (BS1);
        \draw[color = lightgray] (U11) -- (BS1);
        \draw[color = lightgray] (U3) -- (BS1);
        
        \draw[color = lightgray] (U15) -- (BS3);
        \draw[color = lightgray] (U12) -- (BS3);
        \draw[color = lightgray] (U9) -- (BS3);
        \draw[color = lightgray] (U7) -- (BS3);
        \draw[color = lightgray] (U10) -- (BS3);
        \draw[color = lightgray] (U19) -- (BS3);
        
        \draw [color = lightgray](U13) -- (BS2);
        \draw [color = lightgray](U14) -- (BS2);
        \draw [color = lightgray](U17) -- (BS2);
        
        \draw [color = lightgray](U5) -- (BS4);
        \draw [color = lightgray](U18) -- (BS4);
        \draw [color = lightgray](U19) -- (BS1);
        
        \draw [color = lightgray](U1) -- (BS1);
        \draw [color = lightgray](U16) -- (BS1);
        \draw [color = lightgray](U4) -- (BS1);
        \draw [color = lightgray](U2) -- (BS1);
        \draw [color = lightgray](U9) -- (BS1);
        \draw [color = lightgray](U6) -- (BS1);
        \draw [color = lightgray](U8) -- (BS1);
        \draw [color = lightgray](U12) -- (BS1);
        \draw [color = lightgray](U15) -- (BS1);
        \draw [color = lightgray](U10) -- (BS1);
        \draw [color = lightgray](U7) -- (BS1);
        
        \draw [color = lightgray](U3) -- (BS3);
        \draw [color = lightgray](U11) -- (BS3);
    \end{tikzpicture}
    \caption{Example of a 2-connected $AB$ random graph, where circles are $A$-points and squares are $B$-points}
    \label{tikz:example2con}
\end{figure}

We are interested in the degree distribution of the $B$-points, which we obtain by using the fact that every $A$-point in the degree-$j$-Voronoi cell of a random point in $B$ connects to this $B$-point if $j \leq k$. Thus, if the area of a certain cell is known, so is the distribution of the number of $A$-points ($N_A$) in this area:
\begin{align}
    \P(N_A(x) = n) &= \frac{(\la x)^n}{n!}e^{-\la x},\label{eq:number_of_points}
\end{align}
where $x$ denotes the size of this area.

In order to find the degree distribution of a random point in $B$, denoted by $D_B$, we then simply need to find the sum of the sizes of the degree-$j$-Voronoi cells for $1 \leq j \leq k$, which we call $X_{\leq k}$:
\begin{align}
\mathbb{P}(D_{B}=n) &=\int_{0}^{\infty} \mathbb{P}\left(N_A\left(x\right) = n \lijn X_{\leq k}=x\right)  f_{X_{\leq k}}(x) \dx , \label{eq:degree_distribution}
\end{align}
for area distribution $f_{X_{\leq k}}(x)$.\\

In the following sections, we derive expressions for the degree distribution of $B$-points for two spatial distributions of the $B$-points. In Section \ref{sec:hexagonal}, we assume that the points in $B$ are placed in a hexagonal grid, and then in Section \ref{sec:poisson}, we investigate the setting in which they are distributed as a Poisson point process. 

We now provide some definitions that will be used throughout this paper:
\begin{definition}[$k$-th order area]\label{def:areas}
The area on which point $i \in B$ is the $k-$th point if sorted by increasing distance, is denoted by $X_k(i)$.
\end{definition}

\begin{definition}[$\leq k$-th order area]\label{def:k-tharea}
The area on which point $i \in B$ is the $j-$th point where $1 \leq j \leq k$ if sorted by increasing distance, is denoted by $X_{\leq k}(i)$.
\end{definition}

\subsection{Hexagonal grid}\label{sec:hexagonal}
We first investigate the degree distribution and thus area sizes in the regular grid. For one dimension, this means that points $B$ are placed on a line with equal spacing, and for two dimensions we investigate the hexagonal grid.

\subsubsection{1-dimensional case}

The following theorem gives an expression for the degree distribution in the 1-dimensional hexagonal grid:

\begin{theorem}\label{th:1dimhexagonal}
    The degree $D_B$ of a randomly chosen $B$-point in the 1-dimensional regular grid is Poisson distributed with parameter $d$,  where $d$ is the distance between two consecutive points.
\end{theorem}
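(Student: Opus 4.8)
The plan is to exploit the fact that on a regular grid the positions of the $B$-points carry no randomness, so the only randomness in the degree comes from the Poisson process of $A$-points. The consequence is that the mixing density $f_{X_{\leq k}}$ appearing in \eqref{eq:degree_distribution} degenerates to a point mass at a single deterministic value $L$, and the integral collapses to \eqref{eq:number_of_points} evaluated at $x=L$. Thus the whole problem reduces to computing the deterministic length of the region $X_{\leq k}(i)$ on which a fixed grid point $i$ is among the $k$ nearest $B$-points; once that length is known, the Poisson conclusion is immediate.

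To compute it, I would place the tagged point at the origin and the remaining $B$-points at $\{jd : j \in \mathbb{Z}\setminus\{0\}\}$, and for a location $x$ count the grid points strictly closer to $x$ than the origin. By the reflection symmetry $x \mapsto -x$ it suffices to treat $x \geq 0$; then no point $jd$ with $j<0$ can beat the origin, while a point $jd$ with $j>0$ satisfies $|x-jd|<x$ precisely when $0<j<2x/d$. Hence the origin is among the $k$ nearest exactly when at most $k-1$ positive multiples of $d$ lie below $2x/d$, that is, when $|x|\le kd/2$. This shows $X_{\leq k}(i)$ is the single interval $[-kd/2,\,kd/2]$, and more refinedly that each shell $X_j(i)=\{x:(j-1)d/2<|x|<jd/2\}$ has length exactly $d$ — which is the precise reason the $k$-th order areas are \emph{equal} in distribution across all $k$, as announced in the introduction.

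With the area pinned down as the deterministic constant $L=|X_{\leq k}(i)|$, I would substitute $f_{X_{\leq k}}(x)=\delta(x-L)$ into \eqref{eq:degree_distribution}; the integral then returns \eqref{eq:number_of_points} at $x=L$, giving $\P(D_B=n)=\frac{(\la L)^n}{n!}e^{-\la L}$, a Poisson law whose parameter is the intensity $\la$ times the cell length. For single connectivity (one shell) this length is exactly $d$, matching the stated parameter, and each additional order of connectivity contributes one further interval of length $d$.

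I anticipate the only genuine obstacle to lie in the middle step: correctly bookkeeping which grid points are nearer than the tagged point, verifying that the favourable set is a single connected interval — in sharp contrast to the Poisson case, where $k$-th order cells may be disconnected and non-convex — and discarding the measure-zero boundary/tie locations $x\in\{jd/2\}$. Everything downstream is routine, since it is exactly the determinism of the area that turns the compound mixture in \eqref{eq:degree_distribution} back into a plain Poisson distribution.
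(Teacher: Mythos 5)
Your proposal is correct and takes essentially the same route as the paper: the paper's proof likewise observes that every $k$-th order cell is deterministic and consists of two intervals of length $d/2$ (read off Figure~\ref{tikz:uniform_distr}), so that $X_{\leq k}(i)$ is a constant and the compound formula \eqref{eq:degree_distribution} collapses to the plain Poisson law \eqref{eq:number_of_points}; your explicit counting argument for which grid points beat the tagged point merely makes rigorous what the paper asserts from the picture. One remark: your computation (correctly) yields the Poisson parameter $\la L = \la k d$, i.e.\ intensity times total cell length, whereas both the theorem statement and the paper's own proof quote the parameter simply as ``$d$'', suppressing the factors $\la$ and $k$ that appear explicitly in the two-dimensional analogue (Theorem~\ref{th:2dhexagonal}, parameter $\la k \atot/N$) --- so your version is, if anything, more precise than the paper's.
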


\begin{proof}
    In this 1-dimensional case, the distance between two consecutive points is $D_i = B_i - B_{i-1} = d$ is equal for all $i \in B$, where $B_i$ is the coordinate of point $i \in B$. An example of the 1-dimensional grid is given in Figure \ref{tikz:uniform_distr}. From this picture, it can be seen that the $X_k(i) = X_{k+1}(i)$ for all $i \in B$ and $k$, as every $X_k(i)$ consists of two area's of size $d/2$. Therefore, by \eqref{eq:number_of_points}, the degrees $D_B$ are Poisson distributed with parameter $d$.
\end{proof}

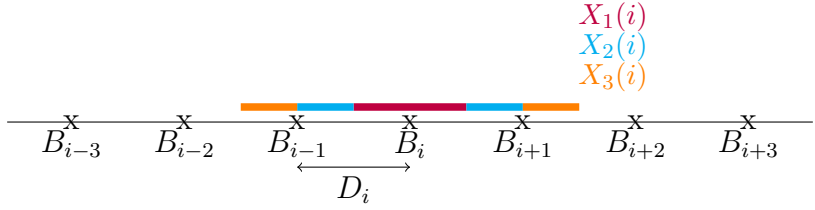
\begin{figure}[tbp]
\centering
    \begin{tikzpicture}
        \node (v1) at (0,-2.5) {};
        \node (v2) at (11,-2.5) {};
        \draw (v1) -- (v2);
        \draw[color = purple, line width = 1 mm] (4.75,-2.3) -- (6.25,-2.3);
        \draw[color = cyan, line width = 1 mm] (6.25,-2.3) -- (7,-2.3);
        \draw[color = cyan, line width = 1 mm] (4,-2.3) -- (4.75,-2.3);
        \draw[color = orange, line width = 1 mm] (7,-2.3) -- (7.75,-2.3);
        \draw[color = orange, line width = 1 mm] (3.25,-2.3) -- (4,-2.3);
        \node at (1,-2.5) {x};
        \node at (2.5,-2.5) {x};
        \node at (4,-2.5) {x};
        \node at (5.5,-2.5) {x};
        \node at (7,-2.5) {x};
        \node at (8.5,-2.5) {x};
        \node at (10,-2.5) {x};
        \node at (1,-2.8) {$B_{i-3}$};
        \node at (2.5,-2.8) {$B_{i-2}$};
        \node at (4,-2.8) {$B_{i-1}$};
        \node at (5.5,-2.8) {$B_i$};
        \node at (7,-2.8) {$B_{i+1}$};
        \node at (8.5,-2.8) {$B_{i+2}$};
        \node at (10,-2.8) {$B_{i+3}$};
        \draw[<->]  (4,-3.1) -- (5.5,-3.1);
        \node at (4.75,-3.4) {$D_i$};
        \node[color = purple] at (8.2,-1.1) {\small$X_1(i)$};
        \node[color = cyan] at (8.2,-1.5) {\small$X_2(i)$};
        \node[color = orange] at (8.2,-1.9) {\small$X_3(i)$};
    \end{tikzpicture}
    \caption{$X(i, k)$ in an 1-dimensional regular lattice}
    \label{tikz:uniform_distr}
\end{figure}

\subsubsection{2-dimensional case}
We now turn to the 2-dimensional setting. The following theorem shows that in the 2-dimensional setting (Figure \ref{fig:triangular_grid}), the degree distribution is again Poisson, but with another parameter:
\begin{figure}[tbp]
    \centering
    \includegraphics[width=.6\textwidth]{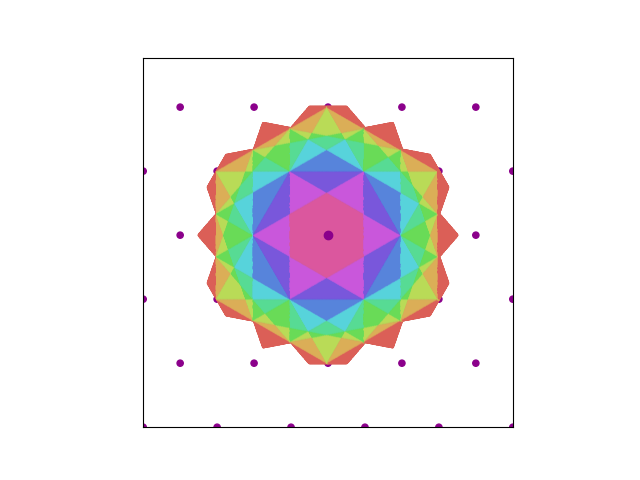}
    \caption{Points and areas in a hexagonal lattice. Every color is an area in which the center point is the $k$-nearest point}
    \label{fig:triangular_grid}
\end{figure}

\begin{theorem}\label{th:2dhexagonal}
    The degree $D_B$ of a randomly chosen $B$-point in the 2-dimensional hexagonal grid is Poisson distributed with parameter $\la k \atot / N$, for $A$-point density $\lambda_A$, $k$ connections, a total area $A_{tot}$ and $N = |B|$.
\end{theorem}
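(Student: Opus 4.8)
The plan is to reduce the statement to a single symmetry-plus-counting observation, mirroring the structure of the proof of Theorem~\ref{th:1dimhexagonal}. By \eqref{eq:degree_distribution}, once the distribution of $X_{\leq k}(i)$ is known, the degree distribution follows immediately from the Poisson counting in \eqref{eq:number_of_points}. So the whole task is to identify the size of the $\leq k$-th order area $X_{\leq k}(i)$ for a typical $i \in B$, and the claim I want to establish is that this size is \emph{deterministic} and equal to $k\atot/N$ for every point of the grid.

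First I would argue that these areas are all equal across the grid. The hexagonal lattice is invariant under its translation group, and every lattice point has an identical configuration of neighbours when ordered by distance. Since membership of a location in $X_{\leq k}(i)$ is determined entirely by the relative distances to the surrounding $B$-points, the region $X_{\leq k}(i)$ is a translate of $X_{\leq k}(j)$ for any two points $i,j$, and in particular its area $|X_{\leq k}(i)|$ does not depend on $i$. On a finite window this step requires periodic (torus) boundary conditions or the exclusion of boundary points; I would state this explicitly, noting that on the infinite grid all points are genuinely equivalent.

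Next I would pin down the common value by a double-counting argument. Every generic location $x$ (avoiding ties) has a well-defined list of $k$ nearest $B$-points, so $x$ lies in exactly $k$ of the regions $X_{\leq k}(i)$, one for each of its $k$ nearest grid points. Integrating the indicator of ``$x \in X_{\leq k}(i)$'' over the total area and summing over $i$ therefore counts almost every point of the plane exactly $k$ times, giving $\sum_{i \in B} |X_{\leq k}(i)| = k\atot$. Combined with the equality of all terms from the previous step, each region has size $|X_{\leq k}(i)| = k\atot/N$.

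Finally, since $X_{\leq k}(i)$ is deterministic, \eqref{eq:degree_distribution} collapses: conditioning on the event $X_{\leq k} = k\atot/N$, which has probability one, \eqref{eq:number_of_points} yields that $D_B$ is Poisson with parameter $\la\, k\atot/N$, as claimed. The main obstacle is the symmetry step: $k$-th order Voronoi cells are in general neither convex nor connected, so unlike the 1-dimensional picture I cannot appeal to any explicit geometric shape. The argument must instead rest purely on the translation invariance of the lattice, and the boundary effects of a finite grid must be handled honestly (or removed via a torus assumption) for the equality of areas to hold exactly.
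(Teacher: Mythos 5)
Your proposal is correct and follows essentially the same route as the paper: symmetry of the lattice (on a torus) to equalize areas across points, plus a counting identity to pin down the common value, and then the Poisson count in \eqref{eq:number_of_points} with a deterministic area. The only cosmetic difference is that you double-count the cumulative region $X_{\leq k}(i)$ directly with multiplicity $k$, whereas the paper's Lemma~\ref{lem:2dhexagonalarea} uses the equivalent fact that for each fixed $j$ the regions $X_j(i)$ tile the torus, giving $X_j(i)=\atot/N$, and then sums over $j\leq k$; summing the paper's identity over $j$ recovers exactly your $\sum_{i\in B}|X_{\leq k}(i)| = k\atot$.
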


To prove this theorem, we need to know the distribution of the area sizes, for which we introduce the following lemma.

\begin{lemma}[Equal areas]\label{lem:2dhexagonalarea}
In a regular lattice, the $k$-th order area for every grid point $i \in B$ is equal:
\begin{align}
    X_k(i) = X_{k+1}(i), \hspace{1cm} \forall i \in B, \forall k \geq 1
\end{align}
\end{lemma}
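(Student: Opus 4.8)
The plan is to avoid any direct computation with the individual cells $X_k(i)$ --- which for $k>1$ are non-convex and disconnected, as noted in the introduction --- and instead exploit the two structural features of a regular lattice: its translation symmetry and the fact that, for each fixed $k$, the cells $\{X_k(i)\}_{i\in B}$ tile the whole space. Concretely, I would prove the stronger quantitative statement that $\mathrm{area}(X_k(i)) = \atot/N$ for every $i$ and every $k$, from which $X_k(i) = X_{k+1}(i)$, read (as in the 1-dimensional case) as equality of areas, is immediate.

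First I would record the symmetry. For any two lattice points $i,j \in B$ there is a lattice translation $\tau$ with $\tau(B_i)=B_j$ and $\tau(B)=B$, since the translation group of a regular lattice acts transitively on its points. This $\tau$ is an isometry carrying the configuration to itself, so it maps the region where $i$ is the $k$-th nearest point bijectively onto the region where $j$ is the $k$-th nearest point; that is, $\tau(X_k(i)) = X_k(j)$. Because $\tau$ preserves area, all cells $X_k(i)$, $i\in B$, share a common area $a_k$ depending only on $k$.

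Next I would use the tiling property. For almost every point $p$ the distances to the lattice points are distinct, so $p$ has a unique $k$-th nearest lattice point; the set of ties is a union of bisectors and hence has measure zero. Thus, for each fixed $k$, the cells $\{X_k(i)\}_{i\in B}$ partition the space up to a null set, giving $\sum_{i\in B}\mathrm{area}(X_k(i)) = \atot$. Combining with the previous step yields $N a_k = \atot$, so $a_k = \atot/N$ for every $k$; in particular $a_k = a_{k+1}$, which is the claim. This is exactly the area that feeds into the Poisson parameter $\la k \atot / N$ of Theorem \ref{th:2dhexagonal}.

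The main obstacle is that this bookkeeping only closes on a finite region: on the infinite lattice there is no finite $\atot$, and the transitive-translation argument must be reconciled with boundary effects. I would handle this by stating the identity on a torus (periodic boundary conditions), where the translation group acts transitively with $N$ points and total area $\atot$ and the partition identity is exact, and then recovering the planar statement by letting the fundamental domain grow and checking that the fraction of cells meeting the boundary vanishes. A secondary point to verify is that each $X_k(i)$ is bounded, hence of finite area: if $|p-B_i|=r$ then the number of lattice points strictly closer to $p$ than $i$ grows like $r^2$, so $i$ can be $k$-th nearest only for $p$ within a bounded annulus of shells around $i$, which guarantees the sums above are well defined.
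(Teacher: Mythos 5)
Your proposal is correct and takes essentially the same route as the paper: the paper's proof likewise works on a torus, uses the symmetry of the lattice to conclude $X_k(i) = X_k(j)$ for all $i, j \in B$ at fixed $k$, and uses the fact that the cells $\{X_k(i)\}_{i \in B}$ tile the total area to get $\sum_i X_k(i) = \atot$, hence $X_k(i) = \atot/N$ independently of $k$. Your added care about measure-zero tie sets, boundedness of cells, and passing from the torus to the plane only makes rigorous what the paper leaves implicit.
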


\begin{proof}
Let us assume we have a lattice on a torus with $N = |B|$ points and a total area $\atot$. Because we have a regular grid, we know that for all $i$ and $j$,
\begin{align}
    X_1(i) = X_1(j) = \frac{\atot}{N}.
\end{align}
Since all $B$-points are equal and symmetric, it holds that for all $i, j \in B$ and $k$:
\begin{align}
    X_k(i) = X_k(j).
\end{align}
Furthermore, the sum over all $k-$th order areas of the points in the grid sum up to $\atot$, since every area in the grid will be the $k-$th closest area to one of the $B-$points in the grid:
\begin{align}
    \sum_{i} X_k(i) = \atot.
\end{align}

This shows that $X_k(i) = X_k(j) = \atot/N$ for all $i, j \in B$ and $k$.
\end{proof}

Now, we prove Theorem \ref{th:2dhexagonal} using Lemma \ref{lem:2dhexagonalarea}.
\begin{proof}
Since $X(i, s)$ is equal for every $s \geq 1$, the area $X_{\leq}(i, k) = k \cdot X(i, 1) = k \atot / N$, where $\atot$ is the total area of the grid and $N$ is the total number of points in that grid. We can now fill in the area in \eqref{eq:number_of_points}, as we assumed the $A$-points are Poisson distributed. Therefore, the degree distribution of a randomly chosen $B$-point is:
\begin{align}
    \mathbb{P}(D_{B}=n) &= \P\left(N_A\left(\frac{k\atot}{N}\right) = n\right) = \frac{(\la \frac{k\atot}{N})^n}{n!}e^{-\la \frac{k\atot}{N}}.\label{eq:poisson2d}
\end{align}
This means that $D_B \sim \text{Poisson}\left(\la k \atot/N\right)$. 
\end{proof}

We would like to compare the different degree distributions $D_B$ for different values of $k$. We therefore compute the coefficient of variation of $D_B$:
\begin{align}
    \E(D_B^2) &= \sum_{n=0}^\infty n^2 \cdot \P(D_B = n) = \la \frac{k \atot}{N}\left(1 + \la \frac{k \atot}{N}\right),\\
    \Var(D_B) &= \E(D_B^2) - \left(\E(D_B)^2\right) = \la \frac{k \atot}{N},\\
    c_{V} &= \frac{\sqrt{\la \frac{k \atot}{N}}}{\la \frac{k \atot}{N}} = \left(\la \frac{k \atot}{N}\right)^{-\frac{1}{2}}.\label{eq:cv1}
\end{align}
This shows that the coefficient of variation decreases for increasing $k$. Thus, when increasing the connectivity in an $AB$-random graph, the degree distribution of the $B$-points becomes more concentrated.

\subsection{Poisson Point Process}\label{sec:poisson}
Now, let us investigate the case where $B$-points are distributed as a Poisson point process. Again, we first find an expression for the degree distribution in one dimension, where the points are placed on a line according to a Poisson process with parameter $\lb$ (Figure \ref{tikz:poisson1d}). We then focus on the 2-dimensional problem, where points are distributed as a homogeneous Poisson point process with parameter $\lb$ (Figure \ref{fig:pppgrid}).

\subsubsection{1-dimensional case}
The following theorem provides the degree distribution in the 1-dimensional Poisson case:

\begin{figure}[tbp]
\centering
    \begin{tikzpicture}
        \node (v1) at (0,-2.5) {};
        \node (v2) at (11,-2.5) {};
        \draw (v1) -- (v2);
        \draw[color = purple, line width = 1 mm] (4.85,-2.3) -- (6.45,-2.3);
        \draw[color = cyan, line width = 1 mm] (6.45,-2.3) -- (7,-2.3);
        \draw[color = cyan, line width = 1 mm] (3.95,-2.3) -- (4.85,-2.3);
        \draw[color = orange, line width = 1 mm] (7,-2.3) -- (8,-2.3);
        \draw[color = orange, line width = 1 mm] (3.4,-2.3) -- (3.95,-2.3);
        \node at (1.3,-2.5) {x};
        \node at (2.4,-2.5) {x};
        \node at (4.2,-2.5) {x};
        \node at (5.5,-2.5) {x};
        \node at (7.4,-2.5) {x};
        \node at (8.5,-2.5) {x};
        \node at (10.5,-2.5) {x};
        \node at (1.3,-2.8) {$B_{i-3}$};
        \node at (2.4,-2.8) {$B_{i-2}$};
        \node at (4.2,-2.8) {$B_{i-1}$};
        \node at (5.5,-2.8) {$B_i$};
        \node at (7.4,-2.8) {$B_{i+1}$};
        \node at (8.5,-2.8) {$B_{i+2}$};
        \node at (10.5,-2.8) {$B_{i+3}$};
        \draw[<->]  (4.2,-3.1) -- (5.5,-3.1);
        \node at (4.75,-3.4) {$D_i$};
        \node[color = purple] at (8.2,-1.1) {\small$X_1(i)$};
        \node[color = cyan] at (8.2,-1.5) {\small$X_2(i)$};
        \node[color = orange] at (8.2,-1.9) {\small$X_3(i)$};
    \end{tikzpicture}
    \caption{$X_k(i)$ in an 1-dimensional Poisson process}
    \label{tikz:poisson1d}
\end{figure}
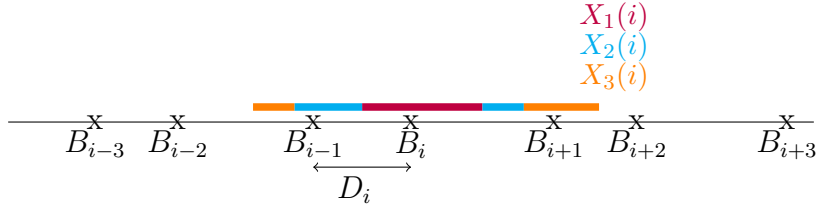

\begin{theorem}\label{th:1dpoisson}
    The degree $D_B$ of a randomly chosen $B$-point in the 1-dimensional Poisson point process has the following distribution function:
    \begin{align}
        \P(D_B = n) &= \frac{\Gamma(2k+n)}{n!\Gamma(2k)} \frac{(2\lb)^{2k} \la^n}{(2\lb + \la)^{2k + n}}.\label{eq:1dpoissonthm}
    \end{align}
We call this distribution the compound Poisson-Erlang distribution.
\end{theorem}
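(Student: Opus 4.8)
The plan is to reduce the statement to the distribution of the total length $X_{\leq k}(i)$ on which the tagged $B$-point is among the $k$ nearest, and then feed that length into the mixing formula \eqref{eq:degree_distribution}. Everything hinges on a single one-dimensional geometric fact, so I would start there. Place the tagged point at $B_i$ and consider a location $x>B_i$ to its right. The distance from $x$ to $B_i$ is $x-B_i$, and a competitor $B_j$ is strictly closer to $x$ exactly when $|x-B_j|<x-B_i$, i.e.\ when $B_j\in(B_i,\,2x-B_i)$. Hence $B_i$ is among the $k$ nearest points to $x$ iff this interval contains at most $k-1$ of the points $B_{i+1},B_{i+2},\dots$. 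As $x$ grows, the right endpoint $2x-B_i$ sweeps rightward at twice the speed of $x$, and $B_i$ first leaves the top $k$ precisely when $2x-B_i$ passes $B_{i+k}$, i.e.\ at $x=(B_i+B_{i+k})/2$. The right contribution therefore has length $(B_{i+k}-B_i)/2$; the mirror argument on the left gives $(B_i-B_{i-k})/2$, so
$$X_{\leq k}(i)=\frac{B_{i+k}-B_{i-k}}{2}.$$

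Next I would identify the distribution of this length. For a typical point of a homogeneous Poisson process, Palm theory (Slivnyak's theorem) lets me treat the consecutive spacings $B_{j+1}-B_j$ as i.i.d.\ $\mathrm{Exponential}(\lb)$. Then $B_{i+k}-B_{i-k}$ is a sum of $2k$ independent such spacings, hence $\mathrm{Gamma}(2k,\lb)$, and halving rescales the rate, giving $X_{\leq k}\sim\mathrm{Gamma}(2k,2\lb)$ with density
$$f_{X_{\leq k}}(x)=\frac{(2\lb)^{2k}}{\Gamma(2k)}\,x^{2k-1}e^{-2\lb x},\qquad x>0.$$

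Finally I would carry out the mixing. Substituting \eqref{eq:number_of_points} and this Gamma density into \eqref{eq:degree_distribution} yields
$$\P(D_B=n)=\frac{\la^n(2\lb)^{2k}}{n!\,\Gamma(2k)}\intinf x^{n+2k-1}e^{-(\la+2\lb)x}\,\dx,$$
and evaluating the Gamma integral $\intinf x^{n+2k-1}e^{-(\la+2\lb)x}\,\dx=\Gamma(n+2k)/(\la+2\lb)^{n+2k}$ gives exactly \eqref{eq:1dpoissonthm}. This is a Poisson mixture over a $\mathrm{Gamma}(2k,2\lb)$ intensity, which is why the result is naturally called a compound Poisson--Erlang distribution.

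The main obstacle is the first step: correctly recognizing the relevant region as a \emph{reflected} interval $(B_i,2x-B_i)$ and confirming that the decisive events are the passages of $B_{i+k}$ (and, symmetrically, $B_{i-k}$) rather than of nearer neighbours. Once this is pinned down, the rest is routine: the Erlang identification of the gap sums and a standard Gamma integral. It is worth flagging that the clean factorization of $X_{\leq k}(i)$ into two \emph{independent} one-sided Erlang pieces, valid because the left and right spacings are disjoint, is precisely what produces the shape parameter $2k$ and hence the exponent $2k$ throughout \eqref{eq:1dpoissonthm}.
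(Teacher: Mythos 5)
Your proof is correct and arrives at \eqref{eq:1dpoissonthm} by the same overall strategy as the paper---establish $X_{\leq k}\sim\mathrm{Erlang}(2k,2\lb)$ and then mix the Poisson count over this area via \eqref{eq:degree_distribution}---but your route to the Erlang law is structurally different. The paper first proves a per-order lemma (Lemma~\ref{thm:1dpois}): each order-$k$ cell consists of two intervals of lengths $D_{i-k+1}/2$ and $D_{i+k}/2$, so $X_k(i)\overset{d}{=}\tfrac12 D_1+\tfrac12 D_2\sim\mathrm{Erlang}(2,2\lb)$, and then sums $X_{\leq k}(i)=\sum_{j=1}^k X_j(i)$, appealing to closure of the Erlang family under summation. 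You instead compute the union directly, using the reflected-interval criterion ($B_j$ is closer than $B_i$ to $x>B_i$ iff $B_j\in(B_i,\,2x-B_i)$) to get $X_{\leq k}(i)=(B_{i+k}-B_{i-k})/2$, i.e.\ half the sum of $2k$ i.i.d.\ $\mathrm{Exp}(\lb)$ spacings. The two computations are equivalent---the paper's per-order sum telescopes exactly to your span---but your version buys some rigor the paper leaves implicit: the $\mathrm{Erlang}(2k,2\lb)$ conclusion is immediate from $2k$ i.i.d.\ spacings, without needing the (true but unstated in the paper) independence of the summands $X_1(i),\dots,X_k(i)$, which you correctly flag as coming from the disjointness of the underlying spacings; you also justify the midpoint boundaries explicitly rather than asserting them, and you ground the i.i.d.\ spacings around the typical point in Slivnyak's theorem. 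What your route does not produce is the paper's standalone observation that the $X_k(i)$ are identically distributed across all $k$, which is of independent interest but not needed for the theorem; the final mixing integral is identical in both proofs.
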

To prove this, we are again interested in the distribution of the area $X_k(i)$ given in Definition \ref{def:areas}.

\begin{lemma}\label{thm:1dpois}
    The $k-$th order area of point $i \in B$, $X_k(i)$ in the 1-dimensional Poisson process is in distribution equal to $\frac{1}{2}D_1 + \frac{1}{2}D_2$, where $D_1$ and $D_2$ are independent and identically exponentially distributed random variables with parameter $\lb$.
\end{lemma}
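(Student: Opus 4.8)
The plan is to determine, for a typical point $B_i$ on the line, the exact geometric description of the set $X_k(i)$ of locations $x$ at which $B_i$ is the $k$-th closest $B$-point, and then to read off its total length as a sum of two interpoint gaps. First I would fix a location $x$ and express the rank of $B_i$ at $x$ as one plus the number of $B$-points lying strictly closer to $x$ than $B_i$, and treat the two sides of $B_i$ separately. For $x > B_i$, every point to the left of $B_i$ is automatically farther from $x$ than $B_i$, so only right-neighbours can outrank $B_i$; the points strictly closer than $B_i$ are precisely those lying in the reflected interval $(B_i, 2x - B_i)$. Hence $B_i$ has rank $k$ on the right exactly when this interval contains $B_{i+1}, \dots, B_{i+k-1}$ but not $B_{i+k}$, which translates into $x \in \bigl(\tfrac{B_i + B_{i+k-1}}{2}, \tfrac{B_i + B_{i+k}}{2}\bigr]$.

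The length of that right-hand piece is $\tfrac12\bigl(B_{i+k} - B_{i+k-1}\bigr) = \tfrac12 D_{i+k}$. By the mirror-image argument for $x < B_i$, the left-hand piece is $\bigl[\tfrac{B_i + B_{i-k}}{2}, \tfrac{B_i + B_{i-k+1}}{2}\bigr)$, of length $\tfrac12 D_{i-k+1}$. These two intervals are disjoint and together constitute $X_k(i)$, so
\[
X_k(i) = \tfrac12 D_{i-k+1} + \tfrac12 D_{i+k}.
\]
Finally I would invoke that the interpoint gaps $D_j = B_j - B_{j-1}$ of a homogeneous Poisson process are i.i.d.\ $\mathrm{Exp}(\lb)$; since the indices $i-k+1$ and $i+k$ differ by $2k-1 \ge 1$ for every $k \ge 1$, the two gaps $D_{i-k+1}$ and $D_{i+k}$ are distinct and hence independent, giving $X_k(i) \stackrel{d}{=} \tfrac12 D_1 + \tfrac12 D_2$ as claimed.

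The step I expect to be the main obstacle is the counting argument that pins down the endpoints of the two intervals: in particular, justifying cleanly that for $x > B_i$ the only competitors that can be nearer than $B_i$ are the right-neighbours contained in $(B_i, 2x - B_i)$, and that their number increases monotonically in $x$, so that the rank-$k$ region really is a single interval on each side rather than a scattered union. Once this combinatorial description is in place, the distributional conclusion is immediate from the exponential law and independence of Poisson gaps. Two minor points I would state carefully are that the boundary midpoints form a measure-zero set and so do not affect the distribution of the length, and that selecting a \emph{typical} point $B_i$ leaves the i.i.d.\ exponential structure of the gaps intact.
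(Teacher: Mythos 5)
Your proposal is correct and follows essentially the same route as the paper: decomposing $X_k(i)$ into a left and right piece delimited by the midpoints $\frac{1}{2}(B_i + B_{i\pm k})$ and $\frac{1}{2}(B_i + B_{i\pm(k-1)})$, obtaining $X_k(i) = \frac{1}{2}D_{i-k+1} + \frac{1}{2}D_{i+k}$, and invoking the i.i.d.\ $\mathrm{Exp}(\lb)$ gaps of the Poisson process. The only difference is that you explicitly justify, via the reflected interval $(B_i, 2x - B_i)$ and monotonicity of the rank in $x$, why each side of the rank-$k$ region is a single interval with those endpoints --- a step the paper asserts directly from its figure rather than proving.
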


\begin{proof}

We express $X_k(i)$ in terms of $D_i$:
\begin{align}
    X_k(i) &= \left|\frac{1}{2}(B_i + B_{i-k}) -  \frac{1}{2}(B_{i} + B_{i-k+1}))\right| + \left|\frac{1}{2}(B_{i+k+1} + B_{i}) - \frac{1}{2}(B_{i+k} + B_{i})\right| \nonumber \\
    &= \frac{1}{2}\left( \left(B_{i-k+1}-B_{i-k}) \right) + \left (B_{i+k} - B_{i+k-1}\right)\right) \nonumber \\
    &= \frac{1}{2}\left(D_{i- k + 1} + D_{i+k}\right) \nonumber \\
    &\overset{d}{=} \frac{1}{2}D_1 + \frac{1}{2}D_{2}\label{eq:thm1dpoisson}
\end{align}
where the last step follows from the fact that the random variables $D_i$ are iid by definition.
\end{proof}

As we now know the size distributions of the areas, we can prove Theorem \ref{th:1dpoisson}.
\begin{proof}
Lemma~\ref{thm:1dpois} shows that for every $n \neq m$, $X_n(i) \overset{d}{=} X_m(i)$. As follows from \eqref{eq:thm1dpoisson}, the distribution of $X_k(i)$ is the sum of two exponential distributions with parameter $\lb$, which means that $X_k(i) \sim \text{Erlang}(2, 2\lb)$. By the property of the Erlang distribution, this means that $X_{\leq k}(i) = \sum_{j = 1}^k X_j(i) \sim \text{Erlang}(2k, 2\lb)$. With this distribution for $X_{\leq k}(i)$ we can find the degree distribution, by using \eqref{eq:degree_distribution}:
\begin{align}
    \P(D_B = n) &= \int_0^\infty \P\left(N_A\left(x\right) = n\right) f_{X_{\leq}(i, k)}\left(x\right)\dx \nonumber \\
    &= \frac{\la^n}{n!} \frac{(2 \lb)^{2k}}{\Gamma(2k)}\int_0^\infty x^{n+2k-1} e^{-(2\lb + \la)x}\dx  \nonumber \\
    &= \frac{\Gamma(2k+n)}{n!\Gamma(2k)} \frac{(2\lb)^{2k} \la^n}{(2\lb + \la)^{2k + n}}.\label{eq:1dpoisson}
\end{align}
\end{proof}

 Again, to compare the degree distributions for different values of $k$, we derive the coefficient of variation of $D_B$:
\begin{align}
    \E(D_B) &= k\lambda,\\
    \E(D_B^2) &= \sum_{n=0}^\infty n^2 \P(D_B = n) = k\lambda \left(1 + k\lambda + \frac{1}{2}\lambda\right),\\
    \Var(D_B) &= \E(D_B^2) - \left(\E(D_B)^2\right) = k\lambda + \frac{1}{2} k\lambda^2,\\
    c_{V} &= \frac{\sqrt{k\lambda + \frac{1}{2}k\lambda^2}}{k\lambda} = \sqrt{\frac{1 + \frac{1}{2}\lambda}{k\lambda}}, \label{eq:cv2}
\end{align}
for $\lambda = \la/\lb$. Interestingly, this coefficient of variation again decreases for increasing values of $k$, again with rate $\sqrt{k}$, similar to~\eqref{eq:cv1}. Thus, the degree distribution of $B$-points becomes more concentrated with the same rate in $k$ as for the hexagonal grid.

\subsubsection{2-dimensional case}\label{sec:2d-case}
We now investigate the setting where $B$-points are distributed as a 2-dimensional Poisson point process. In Figure \ref{fig:pppgrid}, we plotted the $k-$th order area's of a random point in the Poisson point process. This figure shows that these areas are not equal and therefore we need to find a different expression for the degree distribution. First, we find an approximation of the area sizes in this setting, and then we show the approximated degree distribution of the 2-dimensional Poisson point process.\\

\begin{figure}[h!]
    \centering
    \includegraphics[width=.6\textwidth]{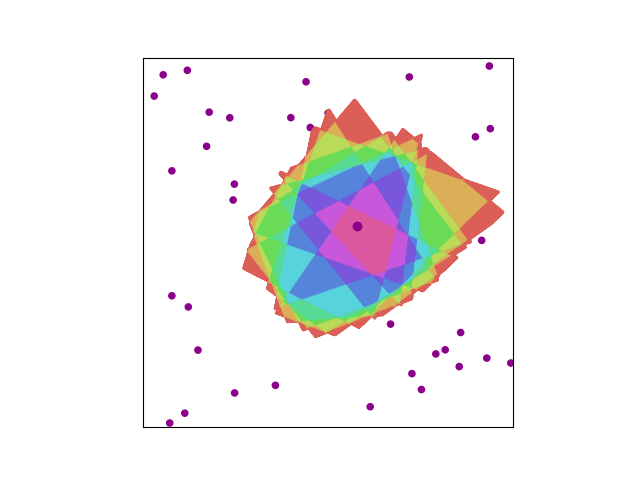}
    \caption{Points and areas in a Poisson point process. Every color is an area in which the center point is the $k$-nearest point.}
    \label{fig:pppgrid}
\end{figure}

\textbf{Fitting the parameters}\\
For the size of the first order area in a Poisson point process, only approximations exist~\cite{dicenzo1989monte,jarai-szabo_size-distribution_2008, weaire1986distribution}. For the $k-$th order areas, we therefore resort to approximations. Equal to \cite{jarai-szabo_size-distribution_2008}, we use a gamma distribution and fit the parameters of this distribution with a simulation, as the authors show that a simple 2-parameter fit gamma distribution is a fair approximation:
\begin{align}
f_{X_{1}}(x)=\frac{3.5^{3.5}}{\Gamma(3.5)} x^{2.5} e^{-3.5 x}. \label{eq:size_distribution}
\end{align}

We extend this parameter fit for higher order Poisson-Voronoi areas.\\


In order to find the degree distribution, we need to find the sum of the sizes of the first to $k$-th order areas, as this is the area on which a $B$-point is the $k$-th or less closest $B$-point, denoted by $X_{\leq k}$. We fit the parameters $a_k$ and $b_k$ in the two-parameter gamma distribution: 
\begin{align}
    f_{X_{\leq k}}(x) &= \frac{{b_k}^{a_k}}{\Gamma({a_k})}x^{{a_k}-1} e^{-{b_k}x}\label{eq:gamma_distribution_paramter_fit}\\
    F_{X_{\leq k}}(x) &= \frac{\gamma({a_k}, {b_k} \cdot x)}{\Gamma({a_k})}\label{eq:gamma_distribution_paramter_fit_cdf},
\end{align}
We assume that the expected area $\E(X_{\leq k})$ is $k$ for every $k$ so that we can simplify \eqref{eq:gamma_distribution_paramter_fit_cdf} with $a_k = k \cdot b_k$. This simplifies the fit to only one parameter:
\begin{align}
    f_{X_{\leq k}}(x) &= \frac{\left(a_k\right)^{a_{k}}}{k^{a_k}\Gamma(a_k)}x^{a_k-1} e^{-\frac{a_k}{k} x} \label{eq:A_leq_k_distribution}\\
    F_{X_{\leq k}}(x) &= \frac{\gamma\left(a_k, \frac{a_k}{k} \cdot x\right)}{\Gamma(a_k)}\label{eq:new_A_leq_k_distribution}
\end{align}

We simulated $n$ Poisson-distributed points on a $\sqrt{n} \times \sqrt{n}$ square and obtained the $k$-th order area of every point in this square by fitting $R$-trees \cite{beckmann1990r} with a precision of $\epsilon$.\\

For the parameter fit, we did $m = 0.7$ million iterations of this algorithm with $n = 100$ points and a precision of $\epsilon^2 = 0.1$, which gives a sample of 7 million points. We used the $\chi^2$-goodness-of-fit-test to find the best parameters 
$a_k = k \cdot b_k$ for the Gamma distribution of $X_{\leq k}$, shown in Table~\ref{tab:simulated_values_area}.\\

Figure \ref{fig:pdf_simulation} shows the goodness of fit of the area distribution. Considering we only used a single-parameter fit, the approximation gives an excellent fit for every $k$. 

\begin{figure}[tbp]
\centering
\begin{minipage}[b]{0.3\textwidth}

\centering
\begin{tabular}{lr}
\toprule
$k$        & $a_k$    \\ \midrule
1          &  $3.53$ \\
2          &   $7.19$    \\
3          &   $11.06$         \\
4          &  $15.21$     \\
5          &   $21.17$ \\ \bottomrule
\end{tabular}
\captionof{table}{Parameters $a_k = k \cdot b_k$ for the size distribution of ${X_{\leq k}}$.}
\label{tab:simulated_values_area}
\end{minipage}
\hfill
\begin{minipage}[t]{0.6\textwidth}
    \centering
    \includegraphics[width = \textwidth]{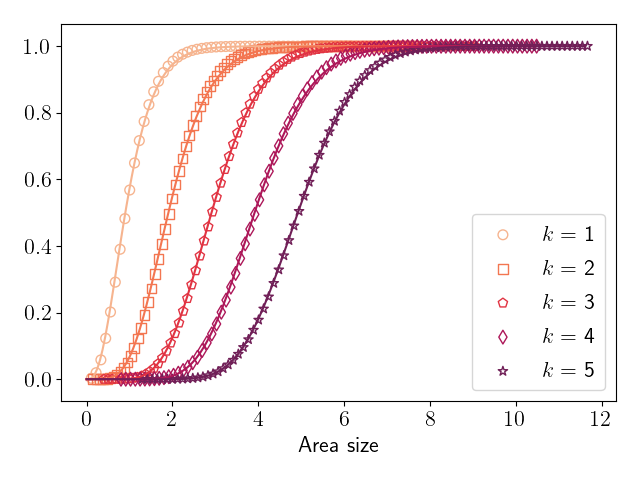}
    \caption{cdf of $X_{\leq k}$, approximated values (line) and observed values (markers).}
    \label{fig:pdf_simulation}
\end{minipage}
\end{figure}

With this area distribution, it is possible to find the degree distribution of $B$ of \eqref{eq:degree_distribution}. Since we assumed that $\E(X_{\leq k}) = 1$ in \eqref{eq:A_leq_k_distribution} while this should be equal to the total area divided by the number of base stations, $\atot / |B| = 1/\lb$, we divide $x$ by $\lb$ in \eqref{eq:degree_distribution} to get the desired expected value and thus the desired distribution:
\begin{align}
    \mathbb{P}(D_{X}=n) &=\int_{0}^{\infty} \mathbb{P}\left(N_Y\left(\frac{x}{\lb}\right) = n \lijn X_{\leq k}=x\right)  f_{X_{\leq k}}(x) \dx \nonumber \\
    &= \frac{\Gamma(n + a_k)}{\Gamma(n+1)\Gamma(a_k)}\frac{a_k^{a_k} (k\lambda)^n}{(k \lambda + a_k)^{a_k + n}},\label{eq:degree_distribution_Poisson}
\end{align}
where $\lambda = \la/\lb$. We call this distribution the compound Poisson-Gamma distribution. The coefficient of variation of $D_B$ is as follows:
\begin{align}
    \E(D_B) &= k\lambda,\\
    \E(D_B^2) &= \sum_{n=0}^\infty n^2 \P(D_B = n) = k\lambda \left(1 + k\lambda + \frac{k \lambda}{a_k}\right),\\
    \Var(D_B) &= \E(D_B^2) - \left(\E(D_B)^2\right) = k\lambda + \frac{(k\lambda)^2}{a_k},\\
    c_{V} &= \frac{\sqrt{k\lambda + \frac{(k\lambda)^2}{a_k}}}{k\lambda} = \sqrt{\frac{1}{k\lambda} + \frac{1}{a_k}}, \label{eq:cv3}
\end{align}
which again decreases for larger values of $k$ as we assume $a_k$ also increases for larger values of $k$ (see Table~\ref{tab:simulated_values_area}). Thus, the degree distribution of $B$ points concentrates at a higher rate in $k$ than the 1-dimensional Poisson process and the hexagonal grid, for which the coefficient of variation decreases as $1/\sqrt{k}$. 

\section{Numerical results on the degree distributions}\label{sec:limiting_degree}
In this section, we compare our analytical results and approximations of the degree distributions of the hexagonal and Poisson grid against simulations. 

\subsection{Regular lattice}
For the hexagonal grid, Theorem \ref{th:2dhexagonal} shows that the degrees are Poisson distributed with parameter $\la k \atot/N$ \eqref{eq:poisson2d}, where $N$ is the number of points and $\atot$ is the total area of the grid. In Figure \ref{fig:limiting_distribution_hexagonal}, we plotted the simulated results together with the analytical Poisson degree distribution. This figure shows that the simulations follow this degree distribution well for all values of $k$.

\begin{figure}[h!]
\begin{subfigure}[!]{0.32\textwidth}
    \centering
    \includegraphics[width = \textwidth]{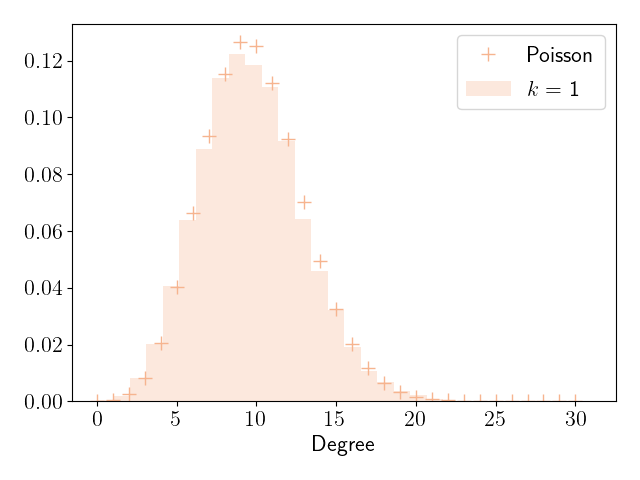}
    \caption{$k = 1$}
    \label{sfig:k1hexagonal}
\end{subfigure}
\hfill
\begin{subfigure}[!]{0.32\textwidth}
    \centering
    \includegraphics[width = \textwidth]{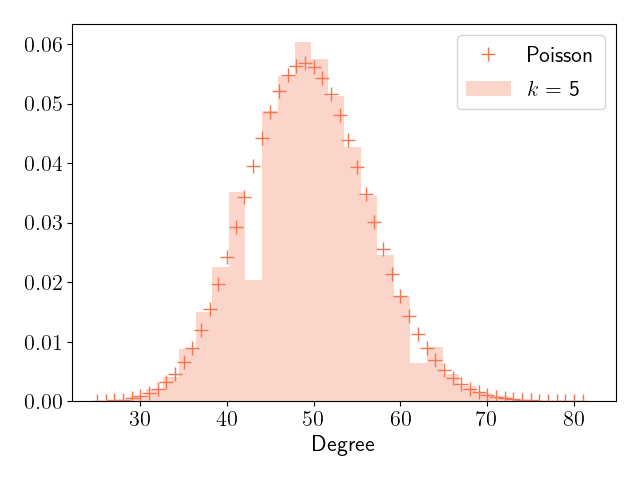}
    \caption{$k = 5$}
    \label{sfig:k5hexagonal}
\end{subfigure}
\begin{subfigure}[!]{0.32\textwidth}
    \centering
    \includegraphics[width = \textwidth]{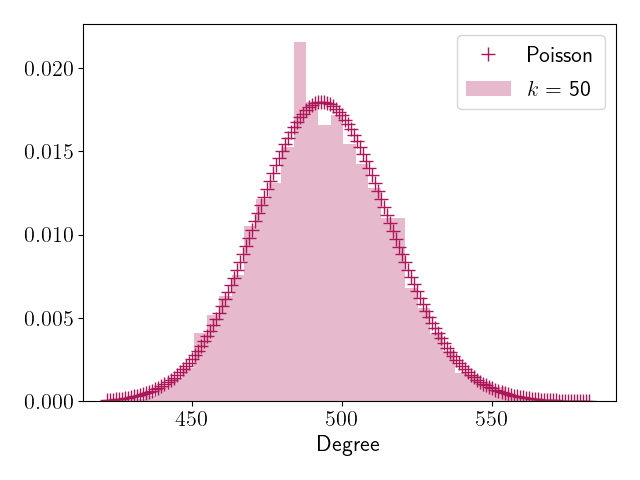}
    \caption{$k = 50$}
    \label{sfig:k50hexagonal}
\end{subfigure}
    \caption{Degree distribution in hexagonal grid for $k = 1, 5$ and $50$ with $22785$ $B-$points and $\la = 0.1$ on a $3000\,m\times3000\,m$ area.}
    \label{fig:limiting_distribution_hexagonal}
\end{figure}

\subsection{Poisson point process}
In Figure \ref{fig:limiting_distribution_Poisson}, we plotted the simulated degree distributions of the 2-dimensional Poisson process for $k = 1, 5$ and $50$ with the one-parameter fit degree distribution for $k = 1$ and $k = 5$ as given in \eqref{eq:degree_distribution_Poisson} and the analytical compound Poisson-Erlang distribution given in Theorem \ref{th:1dpoisson}. This figure shows that the one-parameter fit for $k = 1$ and $k = 5$ fits well. Moreover, the compound Poisson-Erlang degree distribution, which was derived for the 1-dimensional Poisson process, fits reasonably well for the 2-dimensional Poisson process, especially for larger values of $k$. For large values of $k$, the simple 1-dimensional result can also be used instead of the more extensive Gamma distribution with the fitted parameters.

\begin{figure}[h!]
\begin{subfigure}[!]{0.32\textwidth}
    \centering
    \includegraphics[width = \textwidth]{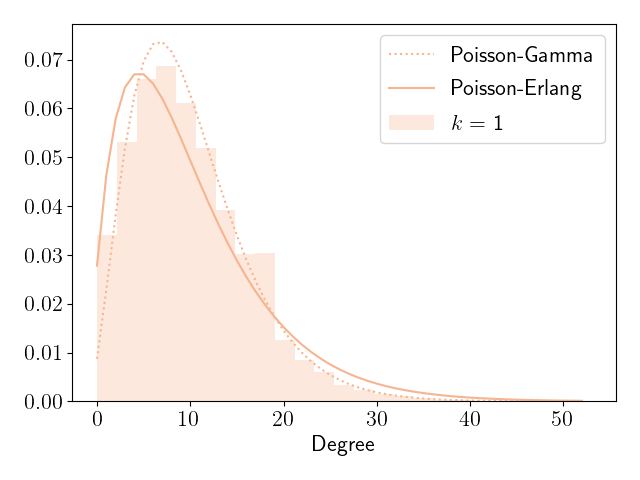}
    \caption{$k = 1$}
    \label{sfig:k1poisson}
\end{subfigure}
\hfill
\begin{subfigure}[!]{0.32\textwidth}
    \centering
    \includegraphics[width = \textwidth]{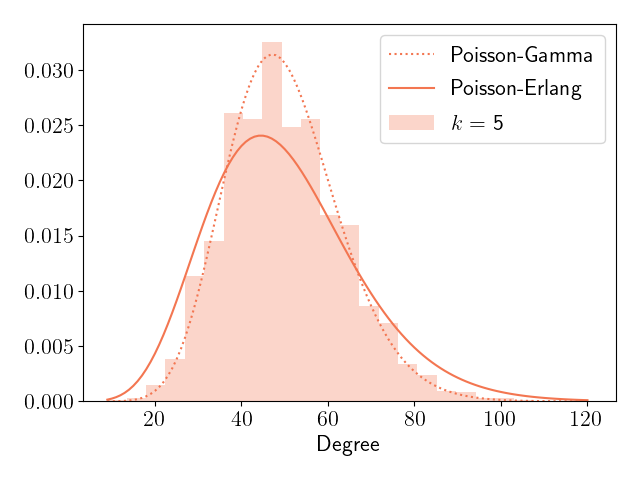}
    \caption{$k = 5$}
    \label{sfig:k5poisson}
\end{subfigure}
\begin{subfigure}[!]{0.32\textwidth}
    \centering
    \includegraphics[width = \textwidth]{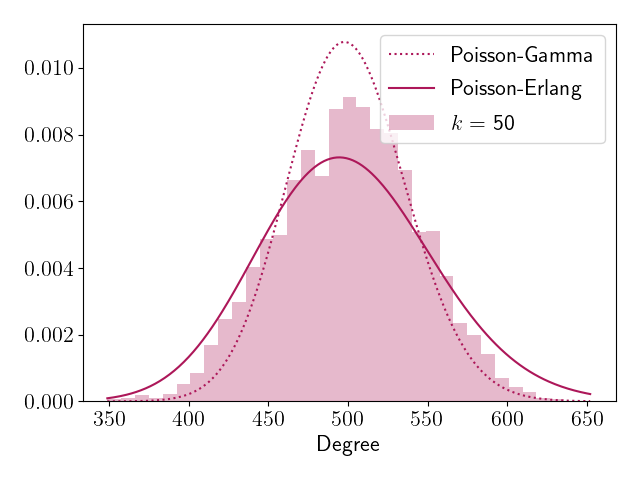}
    \caption{$k = 50$}
    \label{sfig:k50poisson}
\end{subfigure}
    \caption{Degree distribution in Poisson grid for $k = 1, 5$ and $50$ with $\lb = 0.01$ and $\la = 0.1$.}
    \label{fig:limiting_distribution_Poisson}
\end{figure}

\subsection{Real data}
The final case we investigated is the area and degree distribution in a real-world network. We used base station data from OpenCelliD \cite{opencellid} from the Netherlands and focused on the city centre of Enschede (Figure \ref{fig:data_enschede}). While these locations are clearly not distributed as a Poisson point process, we investigate to what extent our approximations for the degree distributions are valid under such non-Poissonian data.\\

\begin{figure}[h!]
    \centering
    \includegraphics[width = 0.5\textwidth]{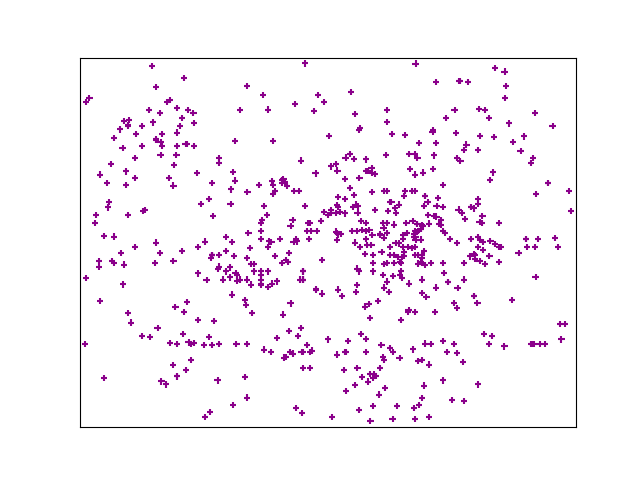}
    \caption{599 base stations in Enschede}
    \label{fig:data_enschede}
\end{figure}

Previous research showed that while base stations are non-Poissonian, investigating the network based on Poisson data can still work under so-called \textit{shadowing} \cite{blaszczyszyn2012quality, ren2011modelling}. Shadowing in the path-loss model can cause perturbations in the observed signal at the user \cite{ren2011modelling}, which causes users to connect to the $k$ base stations with the strongest signal instead of the $k$ closest base stations. Therefore, we incorporate shadowing into this real, non-Poissonian data to investigate the quality of our degree distributions. We use log-normal shadowing \cite{blaszczyszyn2012quality}, which gives a distance after shadowing $d^*(x, y)$ for base station $x$ and user $y$:
\begin{align}
    d^*(x, y) := \frac{d(x, y)}{S_X(y)},
\end{align}
where $d(x,y)$ denotes the real distance between base station $x$ and user $y$ and $S_X(y)$ is a log-normal random variable with mean 1. Then, users connect to the $k$ base stations that have the lowest value of $d^*(x,y)$. In this real data setting, we simulated users with a Poisson process and calculated the degree of every base station. We show results for two different values of the shadowing variance: $\sigma = 0.1$ (weak shadowing) and $\sigma = 1$ (strong shadowing) in Figures \ref{fig:limiting_distribution_real} and \ref{fig:limiting_distribution_real_sigma1}. We plotted the simulations together with the the compound Poisson-Erlang degree distribution \eqref{eq:1dpoisson} and for $k = 1$ and $k = 5$ the fitted compound Poisson-Gamma degree distribution given in \eqref{eq:degree_distribution_Poisson}.\\

\begin{figure}[h!]
\begin{subfigure}[!]{0.32\textwidth}
    \centering
    \includegraphics[width = \textwidth]{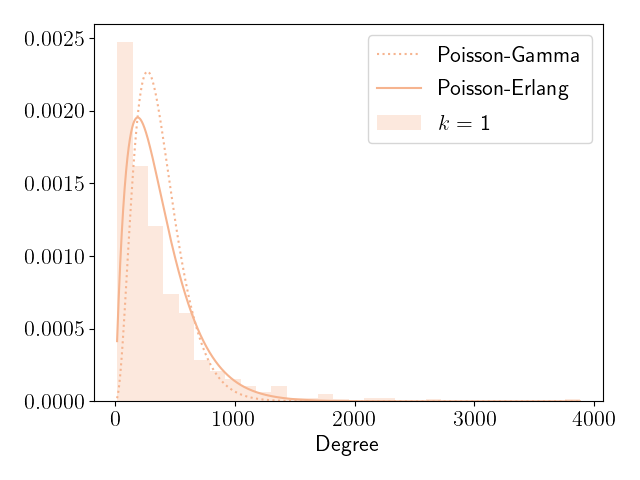}
    \caption{$k = 1$}
    \label{sfig:k1reallow}
\end{subfigure}
\hfill
\begin{subfigure}[!]{0.32\textwidth}
    \centering
    \includegraphics[width = \textwidth]{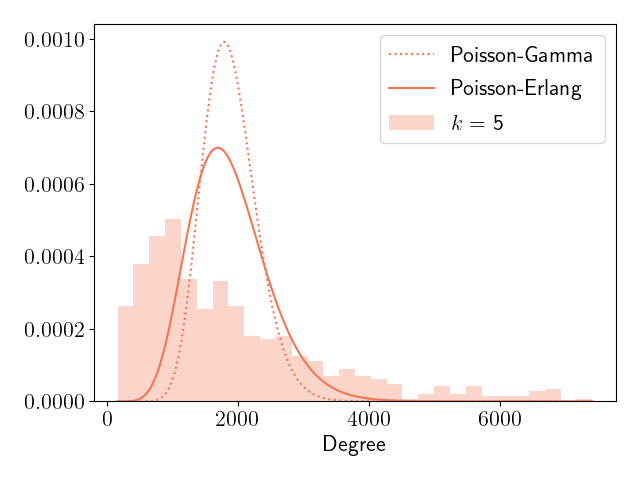}
    \caption{$k = 5$}
    \label{sfig:k5reallow}
\end{subfigure}
\begin{subfigure}[!]{0.32\textwidth}
    \centering
    \includegraphics[width = \textwidth]{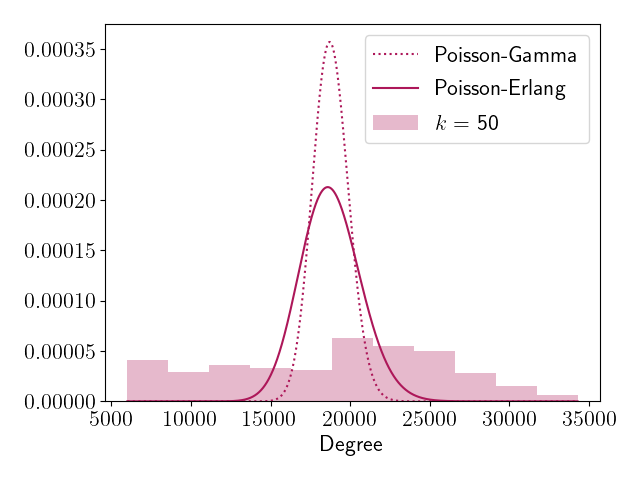}
    \caption{$k = 50$}
    \label{sfig:k50reallow}
\end{subfigure}
    \caption{Degree distribution in Enschede grid for $k = 1, 5$ and $50$ and weak shadowing ($\sigma = 0.1$).}
    \label{fig:limiting_distribution_real}
\end{figure}

\begin{figure}[h!]
\begin{subfigure}[!]{0.32\textwidth}
    \centering
    \includegraphics[width = \textwidth]{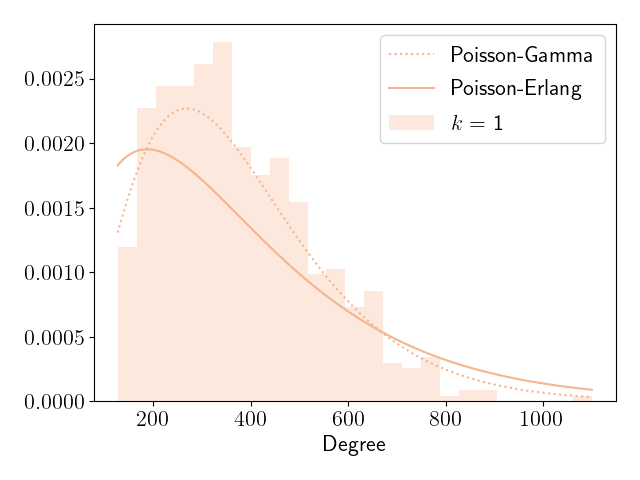}
    \caption{$k = 1$}
    \label{sfig:k1real}
\end{subfigure}
\hfill
\begin{subfigure}[!]{0.32\textwidth}
    \centering
    \includegraphics[width = \textwidth]{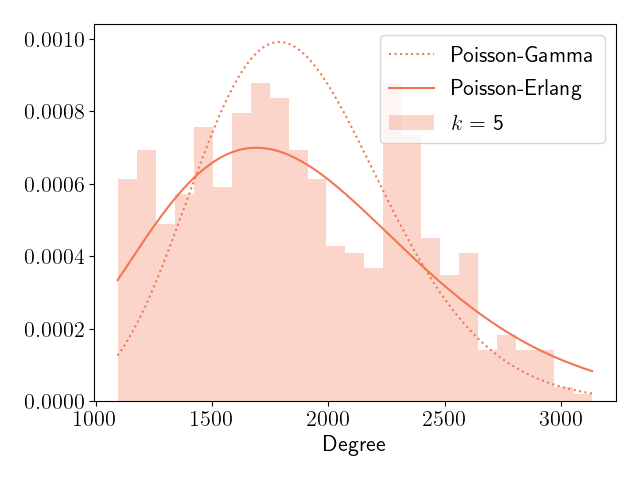}
    \caption{$k = 5$}
    \label{sfig:k5real}
\end{subfigure}
\begin{subfigure}[!]{0.32\textwidth}
    \centering
    \includegraphics[width = \textwidth]{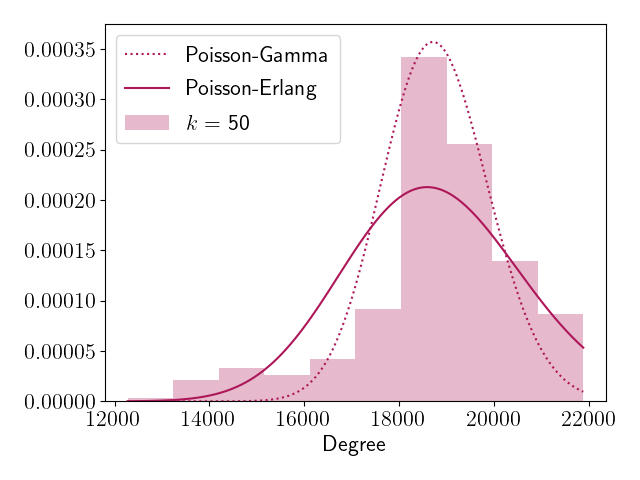}
    \caption{$k = 50$}
    \label{sfig:k50real}
\end{subfigure}
    \caption{Degree distribution in Enschede grid for $k = 1, 5$ and $50$ and strong shadowing ($\sigma = 1$). }
    \label{fig:limiting_distribution_real_sigma1}
\end{figure}
Figure \ref{sfig:k1reallow} shows that for $k=1$ both the compound Poisson-Gamma and the compound Poisson-Erlang distributions seem to fit reasonably well. However, both distributions do not fit well for larger values of $k$ (Figures \ref{sfig:k5reallow}--\ref{sfig:k50reallow}). We can conclude from this that assuming that base stations are distributed by a Poisson point process works well in the case users only connect to 1 base station, but quickly loses accuracy when users are connected to multiple base stations. However, when strong shadowing is taking place, as is the case in Figure \ref{fig:limiting_distribution_real_sigma1}, the compound Poisson degree distributions seem to fit better, not only for $k = 1$. This result implies that our results of the Poisson process can be accurate for a very wide range of spatial processes, when a process of randomness is present as well.

\begin{figure}
    \centering
    \includegraphics[width = 0.5\textwidth]{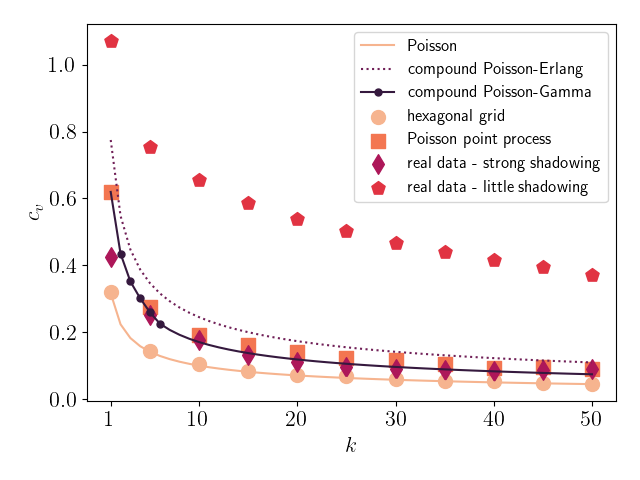}
    \caption{Coefficient of variation for different $k$ and different degree distributions. The markers show the simulated $c_V$'s derived from the simulations in Figures \ref{fig:limiting_distribution_hexagonal}--\ref{fig:limiting_distribution_real_sigma1}, and the lines are Equations \eqref{eq:cv1}, \eqref{eq:cv2} and \eqref{eq:cv3}.}
    \label{fig:cv}
\end{figure}

The coefficients of variation for the degree distribution of the Poisson point process, the hexagonal grid as well as the real data are shown in Figure \ref{fig:cv} for $\lb = \frac{\atot}{N} \approx 0.01$ and $\la = 0.1$. For the compound Poisson-Gamma distribution, we plotted the values of $c_V$ in \eqref{eq:cv3} with linear extrapolated $a_k$ for $k > 5$, using the values of Table \ref{tab:simulated_values_area}. The behaviour of the coefficient of variation for the real grid with shadowing is similar to the one of the Poisson point process, and decreases rapidly in $k$. The compound Poisson-Gamma distribution approximates this coefficient of variation closely, but the compound Poisson-Erlang distribution also works reasonably well as it only slightly overestimates $c_V$. \\

The behaviour of the coefficient of variation for the real grid without shadowing is different from the other three $c_V$'s depicted in Figure \ref{fig:cv} and cannot be approximated by one of the three degree distributions given in \eqref{eq:cv1}, \eqref{eq:cv2} and \eqref{eq:cv3}. Again, the coefficient of variation does slowly decrease, but it is still significant larger than the other three $c_V$'s. This implies that for non-Poissonian data, larger values of $k$ will still result in a more concentrated degree distribution, but not as concentrated as in the Poissonian data. This observation is also shown in Figure \ref{fig:limiting_distribution_real}, as the degree distribution for $k = 50$ does not resemble a concentrated distribution.\\

In general, this plot shows that the coefficient of variation always decreases for larger values of $k$, which means that the load of all connections becomes more evenly balanced among all $B$-points. In the context of wireless networks, this can imply that all $A$-points, the users, will receive a more similar throughput, and the $B$-points, the base stations, will have similar degrees, which results in a more fair distribution of the resources over all users and base stations.

\section{Conclusion}
In this paper, we have derived degree distributions for $AB$ random geometric graphs for different spatial distributions of $B$-points and Poisson-distributed $A$-points. In the case where $B$ points are distributed as a hexagonal grid, we showed that the areas in which a $B$-point is $k$-closest are equal for every point $i\in B$ and every value of $k$ in both one and two dimensions. With this observation, we derived the degree distribution of the $B$-points in one and two dimensions. In the case where $B$ points are distributed as a 1-dimensional Poisson point process, we derived the analytical size distribution of the areas in which a $B$-point is $k$-th closest, which resulted in a compound Poisson-Erlang degree distribution \eqref{eq:1dpoisson} for the degrees of all $B$-points. We fitted a one-parameter Gamma distribution to obtain the $k$-th closest area distribution in the case where B-points are distributed as a 2-dimensional Poisson point process for $k \in \{1, 2, 3, 4, 5\}$. This results in a compound Poisson-Gamma distribution for the degree distribution of $B$-points. For larger $k$, we show that the easier compound Poisson-Erlang degree distribution works well as an approximation for the degree distribution of $B$-points. \\

Moreover, we have shown that the coefficient of variation of the degree distributions for both the hexagonal grid model and the Poisson point process rapidly decrease for larger values of $k$. Therefore the degrees become more centered around the mean as $k$ increases. This can have important implications for applications of $AB$ random graphs. For example, for multi-connected cellular networks, this means that for large $k$, the load in the network becomes more evenly distributed (\textit{fairness}). Investigating the extent to which fairness increases with increasing $k$ is therefore an interesting topic for further research.\\ 

In a case study with real data of base station locations, we have shown that with strong shadowing, which introduces a source of randomness in the observed distance (Figure \ref{fig:limiting_distribution_real_sigma1}), our derived degree distributions for the 1-dimensional and the 2-dimensional Poisson point process approximate the real degree distribution well, even though these data are not distributed according to a Poisson point process. This is in line with \cite{blaszczyszyn2012quality, keeler2018wireless}, in which the authors found that wireless networks appear to be Poisson under strong shadowing. Moreover, it can also be seen that for almost no shadowing (Figure \ref{fig:limiting_distribution_real}), the degree distribution in the data behaves significantly different from the Poisson case, especially for larger values of $k$. A reason for this could be that real base stations are not independently distributed among the grid, which is a key property of the Poisson point process. This mismatch becomes more visible for larger degrees of multi-connectivity, as in this case more base stations and thus more dependencies need to be taken into account. This means that especially when one wants to investigate multi-connectivity in a network with little to no shadowing, it is important to investigate whether the Poisson point process is a suitable model for distributing the $B$-points. Even if it seems to fit well for $k=1$, the fit for larger values of $k$ may be significantly worse, comparing Figures \ref{sfig:k5reallow}--\ref{sfig:k50reallow} with Figures \ref{sfig:k5poisson}--\ref{sfig:k50poisson}. \\

In this research, we have assumed that $A$-points are always distributed as a Poisson point process. However, the locations of the $A$-points can also depend on the locations of the $B$-points in many application areas. For example, $A$ points may follow a heterogeneous distribution instead of a homogeneous Poisson distribution, that depends on dense parts and less dense parts of $B$-points in the spatial process. Deriving degree distributions and results on load balancing for those types of $AB$-random graphs is an interesting topic of further research.

\paragraph{Acknowledgements} The authors would like to thank Nelly Litvak and Suzan Bayhan for useful discussions on the manuscript.

\bibliographystyle{siam}
\bibliography{biblio}

\end{document}